\newtheorem{theorem}{Theorem}[section]
\newtheorem{lemma}[theorem]{Lemma}
\newtheorem{corollary}[theorem]{Corollary}
\newtheorem{pro}[theorem]{Proposition}
\newtheorem{statement}[theorem]{Statement}
\newtheorem{example}[theorem]{Example}
\theoremstyle{remark}
\newtheorem{rem}[theorem]{Remark}
\numberwithin{equation}{section}
\DeclareMathOperator{\So}{SO}
\DeclareMathOperator{\vol}{vol}
\DeclareMathOperator{\Sl}{SL}
\DeclareMathOperator{\id}{id}
\DeclareMathOperator{\Gl}{GL}
\DeclareMathOperator{\pr}{pr}
\begin{document}



\begin{frontmatter}
\title{Existence of an attractor for a geometric tetrahedron transformation}

\author{Dimitris Vartziotis}
\address{NIKI Ltd. Digital Engineering, Research Center, 205 Ethnikis Antistasis Street, 45500 Katsika, Ioannina, Greece}
\author{Doris Bohnet}
\address{TWT GmbH Science \& Innovation, Department for Mathematical Research \&
Services, Ernsthaldenstr. 17, 70565 Stuttgart, Germany}

\begin{abstract}
We analyze the dynamical properties of a tetrahedron transformation on the space of non-degenerate tetrahedra which can be identified with the non-compact globally symmetric $8$-dimensional space $\Sl(3,\mathbb{R})/\So(3,\mathbb{R})$. We establish the existence of a local attractor which coincides with the set of regular tetrahedra and identify conditions which imply that the basin of attraction is the entire space. In numerical tests, these conditions are fulfilled for a large set of random tetrahedra.
\end{abstract}

\begin{keyword}
geometric transformation, symmetry group, equivariant dynamical system, attractor, homogeneous space, Lyapunov exponents

\MSC 37L30, 43A85, 51N30, 22E15
\end{keyword}

\end{frontmatter}


\section{Introduction}\label{s.first}
The research on geometric transformations for tetrahedra has its offspring in the importance of tetrahedral meshes in the computational engineering: physical properties of objects are virtually tested by simulations which discretize the object at the beginning, using mostly a tetrahedral or hexahedral mesh -- depending on the purpose. The convergence rate and error estimates of the numerical solution schemes depend severely on the mesh quality (see \cite{Fried1972,Babuska1976}): distorted mesh elements can decelerate or even impede the convergence of the numerical method and negatively affect the solution accuracy (see numerical examples in \cite{hughes2005}). These are the main reasons why at the beginning of every simulation, a lot of effort is put into the preprocessing in order to guarantee a high quality mesh. It should be remarked that tools for mesh smoothing exist in abundance using a wide range of mathematical methods (see overview in \cite{Frey2008}), but at the moment, they do not usually replace in totality the expensive correction of single elements by hand. \\
Looking for a simple, but effective smoothing method, \cite{VAGW08} introduced the geometric element mesh smoothing based on a generic geometric element transformation which relocates the vertices of every mesh element such that it becomes the most regular possible. Its good convergence properties and applicability were proven in a serial of articles (see \cite{VW12} and references within), also analytically for a single element in \cite{VH14b}. A very comprehensive presentation of this method is given in \cite[Section 6.3]{Lo2015} comparing its runtime, convergence and smoothing results to the dominating techniques in mesh smoothing.    \\
In this article, we study a different geometric transformation for tetrahedra which is derived from the rotational symmetry group of a regular tetrahedron. It can be the base of a geometric mesh smoothing algorithm analogous to the one mentioned above. In its motivational background, it is strongly related to the geometric triangle transformation introduced in \cite{VB14}. We call a transformation \emph{geometric} if it commutes with the similarity transformations of the euclidean space, these are translations, homotheties and rotations. Intuitively, the geometric transformations map similar tetrahedra to similar tetrahedra. \\
Our approach is a dynamical analysis of the tetrahedron transformation $\Theta$: where does a discrete orbit $\left\{\Theta^n(x)\;\big|\;n \geq 0\right\}$ for an arbitrary tetrahedron $x$ converge to? The analysis is based on two facts: firstly, we define the set of tetrahedra as the special linear group $\Sl(3,\mathbb{R})$ which is a non-compact Lie group; secondly, the transformation -- being geometric -- commutes with the rotational group $\So(3,\mathbb{R})$. Consequently, we can employ techniques from the theory of compact group actions and the geometric structure of Lie groups and their Lie algebras together with results of the theory of dynamical systems for the analysis of the dynamical behavior of the transformation. \\
For the understanding of dynamics, attractors play a central role: let $T$ be a continuous map on a topological space $X$, we then call a compact set $\mathcal{A}\subset X$ which is $T$-invariant, $T(\mathcal{A}) \subset \mathcal{A}$, an \emph{attractor} for $T$ if there exists an open $T$-invariant neighborhood $U \supset \mathcal{A}$ such that $\bigcap_{n \geq 0} T^n(U) = \mathcal{A}.$ The maximal set $U$ with the property above is called \emph{basin of attraction} because every $x \in U$ eventually converges under positive iterates of $T$ to $\mathcal{A}$. If $U=X$, the attractor is called a \emph{global attractor}. An attractor can have a complicated topological structure, e.g. being a fractal set, and carry a non-trivial dynamics $T|_{\mathcal{A}}$. \\  
The article is organized as following: we define the space of tetrahedra and discuss briefly its properties. In the subsequent section we motivate and introduce the tetrahedron transformation, the main subject of this article, and prove its basic properties.
We are then prepared to show the existence of a local attractor and the conditions under which its basin of attraction is the whole space of tetrahedra.  \\
While we focus in this article on a specific transformation, our methods are equally well applicable to study the dynamics of any $\So(3,\mathbb{R})$-equivariant transformation on $\Sl(3,\mathbb{R})$.  
\section{Space of tetrahedra}
Every non-degenerate tetrahedron $x$ with vertices $x_0,x_1,x_2,x_3 \in \mathbb{R}^3$ can be written up to translation as a $(3,3)$-matrix $x=(x_1-x_0,x_2-x_0,x_3-x_0)$ with $\det(x)\neq 0$. Therefore, we consider the set $\Gl(3,\mathbb{R})$ of invertible real matrices which is a $9$-dimensional open affine subvariety in $\mathbb{R}^9$ with two connected components corresponding to matrices with negative and positive determinant, respectively. Tetrahedra which have the same shape, but different volumes, should not be distinguished. Consequently, as the volume of a tetrahedron $x$ can be computed by $\frac{1}{6}\det(x)$ it suffices to define the set of tetrahedra $X$ by $$X:= \left\{x \in \Gl(3,\mathbb{R}) \;\big|\; \det(x)=1\right\} =\Sl(3,\mathbb{R})$$ which is the \emph{special linear group} and a $8$-dimensional submanifold in $\mathbb{R}^9$.\\
Accordingly, every tetrahedron $x \in X$ can be written, by polar decomposition, as 
\begin{equation}\label{eq:polar}
x = \rho \sigma, \quad \rho \in \So(3,\mathbb{R}),\;\sigma\;\mbox{positive definite, symmetric matrix}.
\end{equation}
Positive definite symmetric matrices are diagonizable by an orthogonal basis, that is, there exists a orthogonal matrix $q$ such that $q^T\sigma q= \delta$ is a diagonal matrix. So, by orthogonal coordinate change and as the set of orthogonal matrices is a group we can express 
\begin{equation}\label{eq:polar2}
x':=q^Txq = q^T\rho q \delta=\rho' \delta, \mbox{where} \;\rho' \in O(3,\mathbb{R}).
\end{equation}
In this description, it becomes evident that a tetrahedron -- up to its volume and with respect to an appropriate orthogonal basis -- can be uniquely defined by the length of three of its edges (given in $\delta$) and the angles between them (defined in the three-dimensional rotational matrix $\rho'$).\\
\section{Derivation of a tetrahedron transformation}
\subsection{Motivation by rotational symmetry of regular tetrahedron}
Consider a rotation of angle $\frac{2\pi}{3}$ around an axis from a vertex of a regular tetrahedron to the barycenter of the opposite triangle. This transformation as a rotational symmetry sends the regular tetrahedron to itself. Let us extend that transformation to a bijection of the whole set of tetrahedra.\\
Consider a tetrahedron $T$ of $\mathbb{R}^3$ of vertices $x_0,\dots,x_3$ so that the vectors $(x_i-x_0)_{1\leq i\leq 3}$ form a positively oriented basis of $\mathbb{R}^3$. Let $\Delta_1$ be the face of vertices $x_1,x_2,x_3$. Define the \emph{pseudo rotation} $\theta_1$ of the triangle face $\Delta_1$ as follows: denoting by $c_1$ the centroid of $\Delta_1$, the points $\theta_{1}(x_i),i=1,2,3$ are the points in the plane $P_1$ spanned by $\Delta_1$ such that 
\begin{itemize}
\item the vector $\theta_{1}(x_2)-c_1$ is the image of the vector $x_1-c_1$ of the rotation of the plane $P_1$ by the angle $\angle x_1c_1x_2$ around the barycenter $c_1$,
\item the vector $\theta_{1}(x_3)-c_1$ is the image of the vector $x_2-c_1$ of the rotation of the plane $P_1$ by the angle $\angle x_2c_1x_3$ around the barycenter $c_1$,
\item the vector $\theta_{1}(x_1)-c_1$ is the image of the vector $x_3-c_1$ of the rotation of the plane $P_1$ by the angle $\angle x_3c_1x_1$ around the barycenter $c_1$.
\end{itemize}
Analogously, we define pseudo rotations for the remaining three triangle faces. We then have three different images of each vertex by the pseudo-rotations corresponding to the three neighboring faces. We define the final image of $T$ by taking their barycenters.\\
We give a complete formal definition in Section~\ref{sec:transformation}. See also Figure~\ref{fig:tetrahedron} for an illustration.
\begin{figure}
\begin{center}
\includegraphics[width=0.5\textwidth]{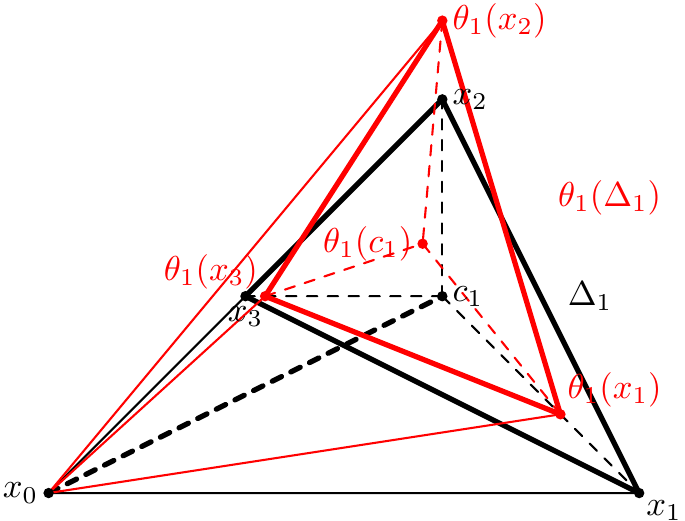}
\caption{The image of the \emph{pseudo-rotation} $\theta_1$ of the triangle face $\Delta_1$ is depicted in red. As the inner angles are not equal, every triangle vertex rotates by a different angle around the barycenter $c_1$ in the plane spanned by the triangle $\Delta_1$. Consequently, the barycenter of the triangle $\theta_1(\Delta_1)$ is usually not equal to $c_1$, in contrary to a real rotation. }
\label{fig:tetrahedron}
\end{center}
\end{figure}  
\subsection{Formal definition of the tetrahedron transformation}\label{sec:transformation}
Let $x=(x_1-x_0, x_2-x_0,x_3-x_0) \in \Gl(3,\mathbb{R})$ define a tetrahedron. Without loss of generality, we can assume that $x_0=0$. Denote the triangle faces by $\Delta_0=(x_0,x_1,x_3), \Delta_1=(x_1,x_2,x_3), \Delta_2=(x_2,x_0,x_3),\Delta_3=(x_0,x_1,x_2)$. As described above, we define first triangle transformations $\theta_i:\mathbb{R}^{9} \rightarrow \mathbb{R}^9,i=0,\dots,3$ for the triangle faces $\Delta_i$ which express the pseudo-rotations (see also the discussion of this triangle transformation in \cite{VB14}). Let $c_i$ denote the corresponding barycenters of the triangle faces $\Delta_i$, we then define for $j=0,1,2$:
\begin{align*}
\theta_{ij}(\Delta_i)&=\left\|x_{i_{j-1}}-c_i\right\|\left\|x_{i_j}-c_i\right\|^{-1}(x_{i_j} - c_i) + c_i, \; \Delta_i=(x_{i_0},x_{i_1},x_{i_2}).
\end{align*}
The tetrahedron transformation is then the result of \emph{rotating} the triangle faces and mapping each vertex onto the barycenter of the three images of the triangle face transformations. The vertex $x_0=0$ is eventually pushed back into the origin. Formally, the tetrahedron transformation is then given by  
\begin{align*}
&\tilde{\Theta}: (\mathbb{R}^3)^3 \rightarrow (\mathbb{R}^3)^3\\
&x=(x_1-x_0,x_2-x_0,x_3-x_0) \mapsto \left(\tilde{\Theta}_0(x), \tilde{\Theta}_1(x),\tilde{\Theta}_2(x)\right)\\
&\mapsto \frac{1}{3}\begin{pmatrix}
\theta_{01}(\Delta_0)+\theta_{10}(\Delta_1) + \theta_{31}(\Delta_3) - (\theta_{00}(\Delta_0) + \theta_{21}(\Delta_2) + \theta_{30}(\Delta_3))\\
\theta_{11}(\Delta_1)+\theta_{20}(\Delta_2) + \theta_{32}(\Delta_3) - (\theta_{00}(\Delta_0) + \theta_{21}(\Delta_2) + \theta_{30}(\Delta_3))\\
\theta_{02}(\Delta_0)+\theta_{12}(\Delta_1) + \theta_{22}(\Delta_2) - (\theta_{00}(\Delta_0) + \theta_{21}(\Delta_2) + \theta_{30}(\Delta_3))\end{pmatrix},
\end{align*}
where $\theta_i$ denotes the geometric triangle transformation of every triangle face $\Delta_i, i=0,\dots,3$ with centroid $c_i$. The transformation $\tilde{\Theta}$ can be restricted to $\Sl(3,\mathbb{R})$. Let $\lambda.x$ denote the multiplication of each column vector of $x$ by $\lambda \in \mathbb{R}$. One easily computes that 
$$\lambda.\tilde{\Theta}(x) = \tilde{\Theta}(\lambda.x),$$
that is, the transformation commutes with homotheties. We therefore define for $\tilde{\Theta}(x) \in \Gl(3,\mathbb{R})$
\begin{align*}
a(x)&=\left(\frac{\vol(x)}{\vol(\tilde{\Theta}(x))}\right)^{\frac{1}{3}}=\left(\frac{\det(x)}{\det(\tilde{\Theta}(x))}\right)^{\frac{1}{3}}.
\end{align*}
and set for all $x \in \Gl(3,\mathbb{R})$ with $\tilde{\Theta}(x) \in \Gl(3,\mathbb{R})$:   
$$\Theta(x) := a(x).\tilde{\Theta}(x)$$
so that $\det(\Theta(x))=\det(x)$. \\
The goal of this article is to study the dynamical system defined on the set of tetrahedra by setting 
$x^{(n)} = \Theta(x^{(n-1)})$
for any tetrahedron $x^{(0)}$ for $n \geq 1$. In particular, we are interested in its limit behavior 
$$\lim_{n \rightarrow \infty} \Theta(x^{(n)}) $$
for arbitrary $x^{(0)}$ inside an appropriately defined subset $X \subset (\mathbb{R}^{3})^3$ of tetrahedra. 
\subsection{Properties of the transformation $\Theta$} 
The transformation $\Theta$ can be naturally restricted to the space $\Sl(3,\mathbb{R})$. In this way, we focus our considerations upon the space of tetrahedra $X=\Sl(3,\mathbb{R})$. Momentarily, we ignore if $\Theta(\Sl(3,\mathbb{R})) \subset \Sl(3,\mathbb{R})$, so the following lemma has to be shown at the very beginning:
\subsubsection{Well-definedness of $\Theta$}
\begin{lemma}
For any $x \in \Sl(3,\mathbb{R})$ we have $\Theta(x)\in \Sl(3,\mathbb{R})$. Further, there exists $U \subset X$ such that the map $\Theta: U \subset X \rightarrow X$ is well-defined.
\end{lemma}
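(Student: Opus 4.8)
The plan is to treat the two assertions separately. The first — that $\Theta$ preserves the special linear group — is a determinant computation valid wherever $\Theta$ is defined; the second pins down a genuine open domain $U$ on which the normalizing factor $a(x)$ exists. Before either step I would record that $\tilde{\Theta}$ is continuous (in fact real-analytic) on the whole open set $\Gl(3,\mathbb{R})$ of non-degenerate tetrahedra: the only operations entering the pseudo-rotations $\theta_{ij}$ are the face centroids $c_i$, the difference vectors $x_{i_j}-c_i$, their Euclidean norms and the reciprocals of those norms, and for a non-degenerate tetrahedron every face is a non-degenerate triangle whose centroid lies strictly inside it and hence differs from each of its vertices. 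Thus $\|x_{i_j}-c_i\|>0$ throughout and no division by zero occurs.

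For the first assertion I would use that the homothety $\lambda.y$ scales every column of $y$ by $\lambda$, so that $\det(\lambda.y)=\lambda^3\det(y)$. Applying this to $\Theta(x)=a(x).\tilde{\Theta}(x)$ and substituting the definition of $a(x)$ yields, at every $x$ with $\det\tilde{\Theta}(x)\neq0$,
\begin{equation*}
\det\Theta(x)=a(x)^3\det\tilde{\Theta}(x)=\frac{\det x}{\det\tilde{\Theta}(x)}\det\tilde{\Theta}(x)=\det x,
\end{equation*}
where the real cube root satisfies $a(x)^3=\det x/\det\tilde{\Theta}(x)$ regardless of the sign of the quotient. In particular $\det x=1$ forces $\det\Theta(x)=1$, so $\Theta(x)\in\Sl(3,\mathbb{R})$ whenever the right-hand side is defined.

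This last proviso is exactly what the second assertion addresses. I would set $U:=\{x\in\Sl(3,\mathbb{R})\mid\det\tilde{\Theta}(x)\neq0\}$; on $U$ the scalar $a(x)$ is defined and, by the previous paragraph, $\Theta$ restricts to a well-defined map $U\to X$. Openness of $U$ is immediate, since it is the preimage of $\mathbb{R}\setminus\{0\}$ under the continuous map $x\mapsto\det\tilde{\Theta}(x)$.

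The real content, and the step I expect to be the main obstacle, is showing that $U$ is non-empty — equivalently, that $\tilde{\Theta}$ is not identically degenerate. The cleanest route evaluates $\tilde{\Theta}$ at the regular tetrahedron $x_{\mathrm{reg}}$: there every face is equilateral, so all the inner angles equal $2\pi/3$ and all the radii $\|x_{i_j}-c_i\|$ coincide, whence each pseudo-rotation is a genuine $2\pi/3$-rotation of its face — precisely the rotational symmetry that motivated the construction. Averaging the three vertex images then returns a non-degenerate, uniformly scaled (and point-reflected) copy of $x_{\mathrm{reg}}$; a short direct computation gives the image vertices as $-\tfrac{1}{3}$ times the original ones. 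Hence $x_{\mathrm{reg}}\in U$, and since $\det\tilde{\Theta}$ is real-analytic its zero set has empty interior, so $U$ is in fact open and dense. This completes the argument.
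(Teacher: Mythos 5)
Your overall architecture is correct but genuinely different from the paper's, so let me compare the two first. The paper handles the first assertion unconditionally: it asserts in one line that the triangle transformations $\theta_i$ carry linearly independent vectors to linearly independent vectors, so that $\tilde{\Theta}(\Gl(3,\mathbb{R}))\subset\Gl(3,\mathbb{R})$ and the renormalization is always available; it then reads ``well-defined'' more strongly than you do, computing $\Theta(x_{eq})=x_{eq}$, checking that $D\Theta(x_{eq})$ has rank $8$, and invoking the implicit function theorem (plus meagerness of the set of singular points) to obtain a dense open $U$ on which $\Theta$ is a \emph{diffeomorphism}. You instead prove the scaling identity $\det\Theta(x)=\det x$ exactly where $\det\tilde{\Theta}(x)\neq 0$ and take $U$ to be that locus, with openness immediate and density from real-analyticity. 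Your route is arguably more careful at the one point where the paper is thin --- its claim that the averaged tetrahedron $\tilde{\Theta}(x)$ never degenerates is asserted rather than proved, and your conditional formulation avoids relying on it --- but note the trade-off in scope: you establish the lemma's first sentence only on $U$ rather than for every $x\in\Sl(3,\mathbb{R})$, and you say nothing about invertibility, whereas the paper's $U$ comes with $\Theta|_U$ a diffeomorphism, which is closer to what the later dynamical arguments use.

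There is, however, a concrete computational error in your non-emptiness step. On an equilateral face all radii $\left\|x_{i_j}-c_i\right\|$ coincide, so the defining formula gives $\theta_{ij}(\Delta_i)=x_{i_j}$ exactly: the pseudo-rotation carries each vertex onto the position of the next one, hence the image point assigned to each vertex slot coincides with that vertex, and every positive term entering $\tilde{\Theta}(x_{eq})$ equals the corresponding vertex while every subtracted term is an image of $x_0=0$. Therefore $\tilde{\Theta}(x_{eq})=x_{eq}$ with $a(x_{eq})=1$ --- consistent with the paper's computation $\Theta(x_{eq})=x_{eq}$ --- and not the point-reflected, $-\tfrac{1}{3}$-scaled copy you claim (which would force the normalization $a(x_{eq})=-3$). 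The slip is harmless to your logic: the corrected value $\det\tilde{\Theta}(x_{eq})=1\neq 0$ still puts $x_{eq}\in U$, and your analyticity argument (valid since $\Sl(3,\mathbb{R})$ is connected and the norms $\left\|x_{i_j}-c_i\right\|$ are positive on all of $\Gl(3,\mathbb{R})$, as you correctly verified) then yields that $U$ is open and dense. But the intermediate claim as written is false and should be corrected.
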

\begin{proof} 
First of all, the triangle transformations $\theta_{i}$ map linearly independent vectors onto linearly independent vectors. Consequently, $\tilde{\Theta}$ maps $\Gl(3,\mathbb{R})$ into $\Gl(3,\mathbb{R})$. By the renormalization of the determinant, the image $\Theta(x)$ lies always in $X=\Sl(3,\mathbb{R})$. \\
We compute $\Theta(x_{eq})=x_{eq} \in X$, for $x_{eq}$ a regular tetrahedron. The map $\Theta$ is smooth, and one computes that $D\Theta(x_{eq})$ has rank $8$, so $x_{eq}$ is a regular point. Due to the implicit function theorem, there exists an open set $U\subset \Sl(3,\mathbb{R})$ around $x_{eq}$ such that $\Theta|_U$ is a diffeomorphism and consequently well-defined on $U$. More generally, as $\Theta$ is differentiable on $X$, the set of singular points is a meager set, so we find a dense and open set $U \subset X$ such that $\Theta|_U$ is a diffeomorphism. 
\end{proof}
\subsubsection{Equivariance of $\Theta$}
Now, we define a group action of the group $G:=\So(3,\mathbb{R})$ on $\Sl(3,\mathbb{R})$ by matrix multiplication from the left:
$$(\rho,x) \in \So(3,\mathbb{R}) \times \Sl(3,\mathbb{R}) \mapsto \rho x \in \Sl(3,\mathbb{R}).$$
\begin{lemma}
The group $G$ as defined above acts freely and properly discontinuously on $\Sl(3,\mathbb{R})$.
\end{lemma}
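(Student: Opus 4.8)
The plan is to verify the two asserted properties separately, exploiting the fact that the action is nothing but left translation by the compact subgroup $\So(3,\mathbb{R})$ sitting inside the Lie group $\Sl(3,\mathbb{R})$. Both properties will turn out to be quick consequences of elementary structural facts, so the argument is short; the only genuine subtlety is terminological and is addressed at the end.

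For freeness I would argue directly from invertibility. Suppose $\rho x = x$ for some $\rho \in \So(3,\mathbb{R})$ and $x \in \Sl(3,\mathbb{R})$. Since every element of $\Sl(3,\mathbb{R})$ is invertible, multiplying on the right by $x^{-1}$ gives $\rho = x x^{-1} = \id$. Hence the stabilizer of every point is trivial and the action is free. This is simply the statement that left translations on a group have no fixed points unless the translating element is the identity.

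For properness the decisive observation is that $G = \So(3,\mathbb{R})$ is compact. I would consider the map $\Phi \colon G \times X \to X \times X$, $(\rho,x) \mapsto (\rho x, x)$, and show it is proper, i.e. that $\Phi^{-1}(K)$ is compact for every compact $K \subset X \times X$. As $\Phi$ is continuous and $K$ is closed, $\Phi^{-1}(K)$ is closed in $G \times X$. Moreover, if $(\rho,x) \in \Phi^{-1}(K)$ then $x = \pr_2(\rho x, x) \in \pr_2(K)$, so $\Phi^{-1}(K) \subset G \times \pr_2(K)$; the latter is compact because both $G$ and $\pr_2(K)$ are compact. A closed subset of a compact set is compact, whence $\Phi^{-1}(K)$ is compact and the action is proper.

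I do not expect any real obstacle: both claims reduce to the invertibility of elements of $\Sl(3,\mathbb{R})$ and the compactness of $\So(3,\mathbb{R})$. The one point deserving care is that, since $\So(3,\mathbb{R})$ is a compact connected Lie group rather than a discrete group, the phrase \emph{properly discontinuously} should be read as \emph{properly}, in the sense of a proper action. Equivalently, one may invoke the standard result that left translation by a \emph{closed} subgroup of a Lie group is always free and proper, together with the fact that $\So(3,\mathbb{R})$ is closed in $\Sl(3,\mathbb{R})$ because it is compact; from this viewpoint the lemma is essentially a one-line consequence, but spelling out the two properties as above keeps the argument self-contained.
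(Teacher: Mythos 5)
Your proof is correct and follows essentially the same route as the paper: freeness from the invertibility of $x$ (the paper phrases this via eigenvectors of $\rho$ to the eigenvalue $1$, you simply multiply by $x^{-1}$), and properness of the map $(\rho,x)\mapsto(\rho x,x)$. If anything, your justification of properness via the compactness of $\So(3,\mathbb{R})$ is more complete than the paper's brief appeal to rotations being isometries, and your closing remark that \emph{properly discontinuously} should be read as \emph{properly} for this non-discrete compact group is an apt terminological correction.
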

\begin{proof}
Let $\rho \in G$ and $x \in \Sl(3,\mathbb{R})$. Then $\rho x \in\Sl(3,\mathbb{R})$ as $\det(\rho)=1$. Assume now that $\rho x=x$. This implies that every column of $x$ has to be an eigenvector to the eigenvalue $1$ of $\rho$, so we can conclude that $\rho$ is  the identity $\id$. The preimages of compact sets in $\Sl(3,\mathbb{R}) \times \Sl(3,\mathbb{R})$ under the map $(\rho,x) \mapsto (x,\rho x)$ are compact as a rotation is an isometry, so the group acts properly discontinuously. 
\end{proof}
This gives us directly the following Corollary (see e.g.\cite[(5.2) Proposition]{dieck1987}): 
\begin{corollary}\label{cor:equivariance}
The orbits $\So(3,\mathbb{R})x=\left\{\rho x\right\}_{\rho \in \So(3,\mathbb{R})}$ are three-dimensional smooth manifolds, all homeomorphic to $\mathbb{R}P^3$. The orbit space $X/\So(3,\mathbb{R})$ is a non-compact homogeneous manifold of dimension $5$.   
\end{corollary}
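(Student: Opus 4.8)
The plan is to derive everything from the general theory of free proper actions of compact Lie groups, for which the cited proposition supplies the manifold structure of orbits and orbit space, and then to identify the resulting objects concretely. The preceding lemma already establishes the only nontrivial input, namely that $G=\So(3,\mathbb{R})$ is a compact Lie group acting freely and properly on $X=\Sl(3,\mathbb{R})$, so the cited proposition [dieck1987, (5.2) Proposition] applies directly.

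First I would treat the orbits. By the orbit theorem, each orbit $\So(3,\mathbb{R})x$ is an embedded smooth submanifold, and the orbit map induces an equivariant diffeomorphism $G/G_x \cong Gx$, where $G_x$ is the isotropy subgroup at $x$. Since the action is free, every isotropy group is trivial, $G_x=\{\id\}$, so each orbit is diffeomorphic to $G=\So(3,\mathbb{R})$ itself; in particular it is a smooth manifold of dimension $\dim\So(3,\mathbb{R})=3$. To identify the homeomorphism type I would invoke the classical double cover $SU(2)\cong S^3 \to \So(3,\mathbb{R})$, a two-to-one group homomorphism with kernel $\{\pm\id\}$, whence $\So(3,\mathbb{R})\cong S^3/\{\pm1\}=\mathbb{R}P^3$, giving the claimed homeomorphism for every orbit.

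For the orbit space, the same proposition guarantees that $X/G$ carries a smooth manifold structure for which the projection $\pi:X\to X/G$ is a submersion, so $\dim(X/G)=\dim X-\dim G=8-3=5$. To see homogeneity I would note that, since left multiplication by $\So(3,\mathbb{R})$ commutes with right multiplication by any $g\in\Sl(3,\mathbb{R})$, the right-translation action of $\Sl(3,\mathbb{R})$ on itself descends to a smooth action on the orbit space; it is transitive because the cosets $\So(3,\mathbb{R})x$ and $\So(3,\mathbb{R})y$ are related by right multiplication with $x^{-1}y$. Thus $X/G$ is a homogeneous $\Sl(3,\mathbb{R})$-space, diffeomorphic to the right-coset space $\So(3,\mathbb{R})\backslash\Sl(3,\mathbb{R})$ and, via $g\mapsto g^{-1}$, to the globally symmetric space $\Sl(3,\mathbb{R})/\So(3,\mathbb{R})$ of the abstract.

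Finally, for non-compactness I would use the polar decomposition \eqref{eq:polar}: every orbit $\So(3,\mathbb{R})x$ contains a unique positive definite symmetric representative $\sigma$, so $X/G$ is identified with the set of positive definite symmetric matrices of determinant one, which is diffeomorphic to $\mathbb{R}^5$ and hence non-compact. I expect no genuine obstacle here, since the one delicate hypothesis — properness of the action — was already furnished by the previous lemma; the remaining ingredients (the isomorphism $\So(3,\mathbb{R})\cong\mathbb{R}P^3$, transitivity of the descended right action, and the polar-decomposition model) are all standard, and the main care required is only in keeping track of left versus right cosets when passing to the symmetric-space description.
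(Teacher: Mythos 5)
Your proposal is correct and follows essentially the same route as the paper, which states the corollary as a direct consequence of the freeness/properness lemma via the cited proposition in tom Dieck; you simply fill in the standard details (trivial isotropy giving $Gx \cong \So(3,\mathbb{R}) \cong \mathbb{R}P^3$, the dimension count $8-3=5$, and the polar-decomposition identification of the quotient with determinant-one positive definite symmetric matrices, which the paper itself invokes later in its discussion of $\Sl(3,\mathbb{R})/\So(3,\mathbb{R})$). Your care about right versus left cosets and the explicit non-compactness argument are welcome additions but do not constitute a different method.
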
   
\begin{rem}
\begin{enumerate}
\item In particular, the orbit $\So(3,\mathbb{R})x_{eq}$ of a regular tetrahedron is a $3$-dimensional compact submanifold of $X$.
\item Note that the subgroup $\So(3,\mathbb{R}) < \Sl(3,\mathbb{R})$ is not a normal subgroup, so the orbit space is not a Lie group itself, but only a smooth manifold. 
\item We define an equivalence relation as follows: For all $x,y \in X$, $x \sim y$ iff $y \in Gx$. This gives us a canonical projection $p$ onto the quotient space of symmetric positive definite matrices with determinant $1$ by $p: \Sl(3,\mathbb{R}) \rightarrow \Sl(3,\mathbb{R})/\So(3,\mathbb{R}),\quad p: x \mapsto [x]_{Gx}$ and the following commutative diagram. 
\begin{equation*}
  \xymatrix{
\Sl(3,\mathbb{R}) \ar[d]^p \ar[r]^{\Theta} &\Sl(3,\mathbb{R})\ar[d]^p\\
\Sl(3,\mathbb{R})/\So(3,\mathbb{R}) \ar[r]^{\Theta} &\Sl(3,\mathbb{R})/\So(3,\mathbb{R})} \quad \xymatrix{
x \ar[d]^p \ar[r]^{\Theta} &\Theta(x)\ar[d]^p\\
Gx \ar[r]^{} &\Theta(Gx)=G\Theta(x) }
 \end{equation*}
It suffices therefore to consider tetrahedra $x \in \Sl(3,\mathbb{R})/\So(3,\mathbb{R})$.  
\end{enumerate}
\end{rem}
\subsubsection{Properties of the space $\Sl(3,\mathbb{R})/\So(3,\mathbb{R})$}\label{sec:properties}
Let $X:=\Sl(3,\mathbb{R}), G:=\So(3,\mathbb{R})$. Let us summarize some essential properties of the quotient space $X/G$ which are taken from \cite[Chapter 5]{Jost} and \cite{Hegalson} where symmetric spaces are extensively discussed. The Lie algebra $\mathfrak{sl}=T_{\id}X$ of $X$ consists of matrices with vanishing trace while the Lie algebra $\mathfrak{g}=T_{\id}G$ consists of skew-symmetric matrices $a$ with $a=-a^T$. Let $\mathfrak{p}$ be the Lie algebra of symmetric matrices $a$ with $a^T=a$ with vanishing trace. One can then show that there exists a \emph{Cartan decomposition} of $X$, that is $\mathfrak{sl}=\mathfrak{g} \oplus \mathfrak{p}$. Using the exponential, one derives that the space $X/G$ is homeomorphic to the set of symmetric positive definite matrices $P$ and, inheriting the differential structure of $P$, a differential manifold. Hence, one can identify the quotient space $X/G$ with the set of symmetric positive definite matrices with determinant $1$. \\
As $X/G$ is simply connected, one computes the tangent space for any $x \in X/G$ by $xT_{\id}(X/G)=x\mathfrak{p}$. As basis of $\mathfrak{sl}$ one chooses $E^{ij}, i\neq j,i,j=1,\dots,3$ where $E^{ij}$ are matrices with zero entries except from one entry equal to $1$ at the position $ij$ together with $H^i=\frac{1}{\sqrt{2}}(E^{ii}-E^{i+1,i+1})$ for $i=1,2$. This gives us directly the basis of $\mathfrak{p}=\left\{a \in \mathfrak{sl}\big|\; a^T=a\right\}$ by $H^1,H^2, \frac{1}{\sqrt{2}}(E^{12}+E^{21}), \frac{1}{\sqrt{2}}(E^{13}+E^{31})$ and $\frac{1}{\sqrt{2}}(E^{23}+E^{32})$. In this way, we have at any point $x \in X/G$ a canonical basis on its tangent space.  
\section{Existence of local attractor}
\subsection{Set of regular tetrahedra}
As seen before, the regular tetrahedron $x_{eq}$ is a fixed point of $\Theta$. One further shows the following: 
\begin{lemma}\label{lemma:attractor}
let $\mathcal{A}$ denote the orbit $Gx_{eq}$ of a regular tetrahedron $x_{eq} \in \Sl(3,\mathbb{R})$. Then $\mathcal{A}$ is a $3$-dimensional local attractor of $\Theta$. Moreover, the following properties hold:
\begin{enumerate}
\item $\Theta(\mathcal{A})=\mathcal{A}$,
\item there exists an open neighborhood $U \subset \Sl(3,\mathbb{R})$ of $\mathcal{A}$ such that for any $x \in U$ we have
$$\lim_{n\rightarrow \infty} \Theta^n(x) =x^{*} \in \mathcal{A}.$$
\end{enumerate} 
\end{lemma}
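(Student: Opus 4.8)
The plan is to transport the problem to the $5$-dimensional quotient $X/G$, where the whole orbit $\mathcal{A}$ collapses to the single point $[x_{eq}]$, to show there that $[x_{eq}]$ is an attracting fixed point of the induced map, and finally to lift the convergence back to $X$. Throughout I use that $\mathcal{A}$ is a compact $3$-dimensional submanifold (Corollary~\ref{cor:equivariance}) and that $\Theta$ is smooth with $D\Theta(x_{eq})$ of rank $8$.

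Property (1) is immediate from equivariance. For $\rho \in G$ one has $\Theta(\rho x)=\rho\Theta(x)$: indeed $\tilde{\Theta}$ commutes with rotations and $\det(\rho x)=\det x$, so the normalising factor satisfies $a(\rho x)=a(x)$. Since $x_{eq}$ is a fixed point, $\Theta(\rho x_{eq})=\rho\Theta(x_{eq})=\rho x_{eq}$ for every $\rho$, hence $\Theta|_{\mathcal{A}}=\mathrm{id}_{\mathcal{A}}$ and in particular $\Theta(\mathcal{A})=\mathcal{A}$.

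The heart of the matter is the local attraction. Passing to the induced map $\bar{\Theta}$ on $X/G$ via the projection $p$ of Section~\ref{sec:properties}, the shape $[x_{eq}]$ is a fixed point, and the task becomes to show that the spectrum of $D\bar{\Theta}([x_{eq}])$ on $T_{[x_{eq}]}(X/G)\cong\mathfrak{p}$ lies in the open unit disc; equivalently, $D\Theta(x_{eq})$ is the identity on $T_{x_{eq}}\mathcal{A}$ by (1) and a contraction transverse to $\mathcal{A}$. To make this computation tractable I would exploit the rotational symmetry group $A_4$ of the regular tetrahedron, which acts on $\mathfrak{p}$ (the traceless symmetric matrices) by its isotropy representation. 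A character computation shows that this $5$-dimensional representation splits into a $2$-dimensional and a $3$-dimensional real irreducible component. Because $\Theta$ is geometric it commutes with these symmetries, so $D\bar{\Theta}([x_{eq}])$ intertwines the representation; by Schur's lemma it is therefore scalar on each isotypic component, and the whole spectrum is determined by evaluating the derivative on a single vector in each component. Carrying out this explicit evaluation for the rather involved map $\tilde{\Theta}$ and checking that the two resulting scalars have modulus $<1$ is the main obstacle of the proof; the symmetry reduction is precisely what turns an intractable $5\times5$ differentiation into two scalar computations.

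Granting the spectral bound, $[x_{eq}]$ is a locally attracting fixed point of $\bar{\Theta}$: in an adapted norm $\bar{\Theta}$ is a local contraction, so there is an open neighbourhood $\bar{U}\subset X/G$ with $\bar{\Theta}^n\to[x_{eq}]$ on $\bar{U}$. Putting $U:=p^{-1}(\bar{U})$, a $G$-invariant open neighbourhood of $\mathcal{A}$, equivariance gives $p(\Theta^n x)=\bar{\Theta}^n(p(x))\to[x_{eq}]$, so $\Theta^n x$ approaches the fibre $\mathcal{A}$. To upgrade this into convergence to a single point $x^{\ast}\in\mathcal{A}$ I would use that $\mathcal{A}$ consists of fixed points: the transverse distance contracts geometrically, $d(\Theta^n x,\mathcal{A})\le C\kappa^n$ with $\kappa<1$, while the displacement tangent to $\mathcal{A}$ in one step is bounded by a constant multiple of the transverse distance (since $\Theta$ fixes $\mathcal{A}$ pointwise, the tangential drift is controlled by $d(\cdot,\mathcal{A})$), so $d(\Theta^{n+1}x,\Theta^n x)\le C'\kappa^n$ is summable. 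Hence $(\Theta^n x)_n$ is Cauchy and converges to some $x^{\ast}\in\mathcal{A}$. This is exactly the stable foliation of the normally attracting fixed-point manifold $\mathcal{A}$ furnished by the theory of normally hyperbolic invariant manifolds.
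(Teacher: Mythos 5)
Your outline has the right architecture in places, but it has a genuine gap at exactly the decisive step: the transverse spectral bound is never established. You explicitly defer "carrying out this explicit evaluation" as "the main obstacle of the proof" --- but that evaluation \emph{is} the content of the lemma; everything else (adapted norm, contraction, lifting, Cauchy argument) is routine once the spectrum of $D\Theta(x_{eq})$ transverse to $\mathcal{A}$ is known to lie strictly inside the unit disc. The paper supplies precisely this input, by a direct (numerical) computation of $D\Theta(x_{eq})$: three eigenvalues of modulus $1$ tangent to the orbit and five eigenvalues of modulus $<1$ (a complex-conjugate pair and three real ones, with contraction constant $c\approx 0.68$), then propagates the spectrum along $\mathcal{A}$ via the conjugation identity $D\Theta(\rho x)=\rho^T D\Theta(x)\rho$ (its Proposition on the Jacobian) and invokes the Stable Manifold Theorem to produce the stable discs $W^s(x)$, $x\in\mathcal{A}$. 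A proof proposal that stops short of verifying the contraction has not proved the lemma, since nothing in the construction of $\Theta$ makes attraction self-evident.

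Moreover, the symmetry reduction you propose as a substitute is itself on shaky ground. For $D\bar{\Theta}([x_{eq}])$ to intertwine the isotropy representation of the tetrahedral group $A_4$ on $\mathfrak{p}$, you need $\Theta$ to commute with the vertex-relabeling action, which acts on the matrix $x$ by multiplication \emph{from the right}; "geometric" only gives commutation with left multiplication by $\So(3,\mathbb{R})$, which acts trivially on the quotient $X/G$, and the paper explicitly warns that $\Theta(x\rho)\neq\Theta(x)\rho$ in general. Even granting the needed finite equivariance, real Schur's lemma does not make the intertwiner scalar on each isotypic piece: the $2$-dimensional component is of complex type, so its commutant is $\mathbb{C}$ (a rotation-scaling, two real parameters --- consistent with the complex pair found numerically), while scalarity on the real-type $3$-dimensional component would force a \emph{triple} real eigenvalue; the paper instead reports three real eigenvalues with no indication they coincide, which, if they are distinct, would outright falsify the assumed $A_4$-equivariance. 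By contrast, your property (1) via left equivariance and $a(\rho x)=a(x)$ is correct, and your final step --- upgrading geometric decay of $d(\Theta^n x,\mathcal{A})$ to convergence to a single $x^{*}\in\mathcal{A}$ by bounding the tangential drift per step by a constant times the transverse distance and summing --- is sound, and matches the paper's remark that points near $\mathcal{A}$ converge to a single fixed tetrahedron, since $\mathcal{A}$ consists of fixed points.
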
 
\begin{proof}[Proof of Lemma~\ref{lemma:attractor}]
First of all, we show a preliminary result for the Jacobian of $\Theta$: 
\begin{pro}\label{pro:jacobian}
Let $x \in \Sl(3,\mathbb{R})$, then the Jacobian $D\Theta(x)$ of $\Theta$ commutes with orthogonal matrices, that is
$$D\Theta(\rho x) = \rho^TD\Theta(x)\rho$$
where $\rho \in \So(3,\mathbb{R}).$
\end{pro}
\begin{proof}[Proof of Proposition~\ref{pro:jacobian}]
The proof is by computation, applying the derivation rule for concatenated transformations and utilizing that $\rho\Theta(x)=\Theta(\rho x)$. Deriving the left hand side, we get
\begin{equation}\label{eq:first}
D(\rho \circ \Theta(x)) = D(\Theta(x))|_x \circ D(\rho(\Theta(x))|_{\Theta(x)} = D(\Theta(x))|_x \circ \rho.
\end{equation}
For the right hand side, we get
\begin{equation}\label{eq:right}
D(\Theta \circ \rho(x)) = D(\rho(x))|_x \circ D(\Theta(\rho(x)))|_{\rho x} = \rho \circ D(\Theta(\rho x))|_{\rho x}.
\end{equation}
This results in 
\begin{equation}\label{eq:final}
D(\Theta(x))|_x \circ \rho = \rho \circ D(\Theta(\rho x))|_{\rho x}
\end{equation}
finishing the proof.
\end{proof}
Consider now an arbitrary regular tetrahedron $x_{eq} \in \Sl(3,\mathbb{R})$. 
Numerically, we compute the Jacobian $D\Theta(x_{eq})$ and its eigenvalues: the Jacobian has three eigenvalues with absolute value $1$ corresponding to the three-dimensional tangent space to the group orbit $\mathcal{A}$. The remaining five eigenvalues $\lambda_4,\dots,\lambda_8$ of the Jacobian consist of one pair of complex conjugate and three real eigenvalues which have all an absolute value strictly smaller than $1$. As the eigenvalues are an invariant for similar matrices, Equation~\ref{eq:final} implies that the eigenvalues of the Jacobian for any $x \in \mathcal{A}$ are exactly the same. Consequently, the tangent space at $x \in \mathcal{A}$ splits in two $D\Theta$-invariant subspaces
$$T_{x}\Sl(3,\mathbb{R}) = T_{x}\mathcal{A} \oplus E^s_{x},\quad x \in \mathcal{A}.$$
The tangent space $T_x\mathcal{A}$ is spanned by three skew-symmetric matrices which is in accordance to the fact that the group of skew-symmetric matrices is exactly the Lie algebra of $\So(3,\mathbb{R})$. The subspace $E^s_{x}$ is spanned by the five eigenvectors corresponding to $\lambda_4,\dots,\lambda_8$. There exists therefore $c < 1$ such that 
$$\left\|D\Theta(x)v\right\| < c\left\|v\right\|, \quad v \in E^s_{x}, x \in \mathcal{A}.$$
The numerical computation gives a value $c \approx 0.68$. 
Therefore, by the Stable Manifold Theorem (see \cite[Lemma 6.2.7]{katok1995}), there exists for any $x \in \mathcal{A}$ locally a five-dimensional disc $W^s(x)$ with the following properties: 
\begin{enumerate}
\item $x \in W^s(x)$ 
\item $T_{x}W^s(x) \subset E^s_{x}$
\item $\left\|\Theta^n(y)-x\right\| \rightarrow 0,\; \mbox{for any}\; y \in W^s(x).$
\end{enumerate}
The orbit $\mathcal{A}$ of the regular tetrahedron is therefore a local attractor. 
\end{proof}
\begin{rem}
In fact, we proved a stronger result: The positive iterates $\Theta^n(x)$ for tetrahedra sufficiently close to $\mathcal{A}$ do not only converge to $\mathcal{A}$, but to a single regular tetrahedron $x^{*} \in \mathcal{A}$ as $\mathcal{A}$ is not only a $\Theta$-invariant set, but a set of fixed points.
\end{rem}
\begin{figure}
\begin{center}
\includegraphics[width=0.25\textwidth]{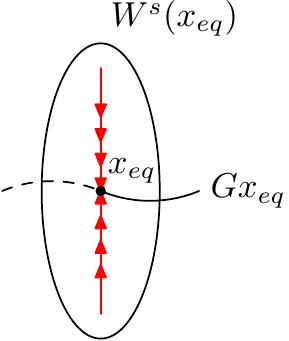}
\caption{The stable disk of $x_{eq} \in \mathcal{A}$ with iterates $\Theta^n(x), x \in W^s(x_{eq})$ converging to $x_{eq}$ in red.}
\label{fig:stableset}
\end{center}
\end{figure}
\subsection{Existence of further fixed points}
The natural question is if any non-regular tetrahedron $x \in \Sl(3,\mathbb{R})\setminus \mathcal{A}$ is a fixed point of $\Theta$. Let $x$ be any tetrahedron. The iteration step of the Newton method is defined for $x=x^{(0)}$ and $n\geq 0$ by
\begin{align*}
(D\Theta(x^{(n)})-\id)y &=-\Theta(x^{(n)})+x^{(n)},\\
x^{(n+1)} &= y + x^{(n)}.
\end{align*}
We performed numerical tests with 1000 randomly chosen tetrahedra and the Newton iteration converged for all of them to a regular tetrahedron which does not prove, but strongly suggests that the only fixed points of $\Theta$ are indeed the regular tetrahedra:
\begin{statement}\label{statement}
The numerical search for fixed points of $\Theta$ only determine regular tetrahedra as fixed points.
\end{statement}  
\section{From local towards global dynamics}
First of all, we would like to stress that $\Theta$ does not commute with right multiplication by orthogonal matrices, that is, we have in general 
$$\Theta(x\rho)\neq\Theta(x)\rho.$$
But this implies that we cannot perform an orthogonal coordinate change such that $x=\rho \delta$ is the product of an orthogonal and a diagonal matrix. \\
In order to prove the global convergence of an arbitrary tetrahedron $x$ towards a regular tetrahedron we have to look at the dynamics along an orbit $\Theta^n(x), n\geq 0$ and show that there exists $N < \infty$ such that $\Theta^N$ on $X/G$ is a contraction.\\
The space $X=\Sl(3,\mathbb{R})$ is $8$-dimensional and the group orbits $\left\{Gx=\So(3,\mathbb{R})x\right\}_{x \in X}$ decompose $X$ into compact $3$-dimensional manifolds. At every $x \in X$ the tangent space $T_xX$ splits into the tangent space of the group orbit $T_xGx$ and its orthogonal complement $E^s_x$, that is $T_xX=T_xGx \oplus E^s_x$. The tangent space of the group orbit is certainly $D\Theta$-invariant, that is $D\Theta(x)(T_xGx) \subset T_{\Theta(x)}G\Theta(x)$, while the complement $E^s_x$ is generally not. 
\begin{figure}
\begin{center}
\includegraphics[width=0.7\textwidth]{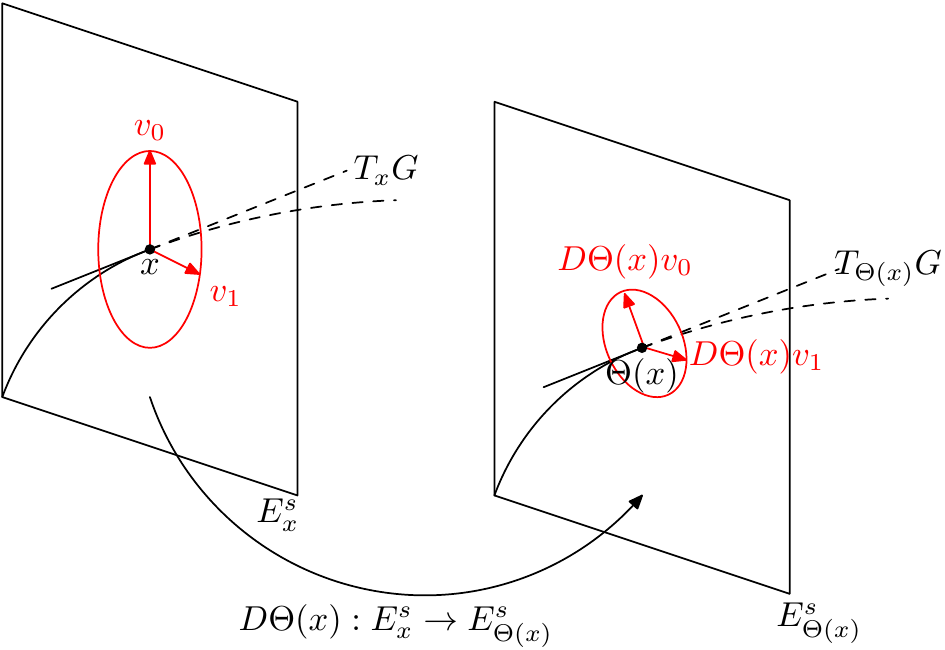}
\caption{Dynamics along an orbit of $\Theta$: the dynamics of $\Theta$ is controlled by the stretching and contracting inside the tangent subspace $E^s_x$ orthogonal to the tangent space of the group orbit $Gx$. }
\label{fig:dynamic}
\end{center}
\end{figure}
\begin{lemma}\label{lemma:contracting}
Let $x \in X$ and let $T_xX=T_xGx \oplus E^s_x$ be the splitting of the tangent space $T_xX$ at $x$. Assume that  $$\max_{v \in E^s_x}\left\|D\Theta(x)v\right\| < \left\|v\right\|,$$ 
then there exists $\epsilon > 0$ such that 
\begin{enumerate}
\item $D_{\epsilon,x} \times Gx$ is a tubular neighborhood of $Gx$, where $D_{\epsilon,x}$ is a disk centered at $x$ of radius $\epsilon >0$ transverse to the group orbit;
\item there exists $0 < \lambda < 1$ such that $\Theta(D_{\epsilon,x} \times Gx) \subset D_{\lambda \epsilon,\Theta(x)} \times G\Theta(x)$.
\end{enumerate}
\end{lemma}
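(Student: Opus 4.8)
The plan is to treat the two assertions separately: (1) is a purely differential-geometric statement about the normal geometry of the compact orbit, whereas (2) carries the dynamics and is obtained by spreading the single-point contraction hypothesis over the whole orbit by means of the equivariance of $\Theta$. For (1) I would build the tube from the group action. Since $G=\So(3,\mathbb{R})$ acts freely and properly discontinuously, $Gx$ is a compact embedded $3$-manifold and $E^s_x=(T_xGx)^{\perp}$ is its $5$-dimensional normal space at $x$. Let $D_{\epsilon,x}$ be the image under the exponential map of the $\epsilon$-ball in $E^s_x$, a disk transverse to $Gx$, and consider
\[
\Phi\colon G\times D_{\epsilon,x}\longrightarrow X,\qquad (g,y)\longmapsto g\cdot y .
\]
At $(\id,x)$ the differential of $\Phi$ maps the $G$-factor isomorphically onto $T_xGx$ and the disk factor onto $E^s_x$; as $T_xX=T_xGx\oplus E^s_x$, $d\Phi_{(\id,x)}$ is invertible. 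Compactness of $G$ together with freeness and properness of the action then yields, for all sufficiently small $\epsilon$, that $\Phi$ is a diffeomorphism onto an open neighborhood of $Gx$, which is precisely the tube $D_{\epsilon,x}\times Gx$; this proves (1).

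Next I make the transverse contraction uniform along $Gx$. Put $c:=\max_{0\neq v\in E^s_x}\|D\Theta(x)v\|/\|v\|$, which is $<1$ by hypothesis. Left translation $L_\rho$ by $\rho\in G$ is a Frobenius isometry sending $Gx$ to itself and the splitting at $x$ to the splitting at $\rho x$, so $E^s_{\rho x}=\rho\,E^s_x$. By Proposition~\ref{pro:jacobian}, $D\Theta(\rho x)$ is the conjugate of $D\Theta(x)$ by the orthogonal matrix $\rho$; hence, under the isometric identification $E^s_{\rho x}=\rho E^s_x$, the operator $D\Theta(\rho x)|_{E^s_{\rho x}}$ is isometrically conjugate to $D\Theta(x)|_{E^s_x}$ and obeys the \emph{same} bound $\|D\Theta(\rho x)w\|\le c\|w\|$ for all $w\in E^s_{\rho x}$. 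Thus the single constant $c<1$ controls the transverse rate at every point of the compact orbit.

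Finally I prove (2). By equivariance $\Theta(Gx)=G\Theta(x)$ and $\Theta(g\cdot y)=g\cdot\Theta(y)$, while the target tube $D_{\lambda\epsilon,\Theta(x)}\times G\Theta(x)$ is $G$-invariant by construction; hence it suffices to control $\Theta$ on the single slice $D_{\epsilon,x}$ and then translate by $g$. For $z=\exp_x(v)\in D_{\epsilon,x}$ with $v\in E^s_x$, $\|v\|\le\epsilon$, the transverse coordinate of $\Theta(z)$ in the target tube is, to leading order, $\pr_{E^s_{\Theta(x)}}\bigl(\Theta(z)-\Theta(x)\bigr)=\pr_{E^s_{\Theta(x)}}\bigl(D\Theta(x)v\bigr)+o(\|v\|)$. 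The decisive point is that, although $E^s$ is not $D\Theta$-invariant, orthogonal projection is norm-nonincreasing, so
\[
\bigl\|\pr_{E^s_{\Theta(x)}}D\Theta(x)v\bigr\|\le\|D\Theta(x)v\|\le c\|v\|\le c\,\epsilon .
\]
Choosing $\lambda\in(c,1)$ and then $\epsilon$ small enough that the $o(\|v\|)$ remainder — uniform over the compact orbit by smoothness of $\Theta$ — is dominated by $(\lambda-c)\epsilon$, we obtain that every $\Theta(z)$ lies in the $\lambda\epsilon$-tube about $G\Theta(x)$; applying $g$ and invoking $G$-invariance of the tube gives $\Theta(D_{\epsilon,x}\times Gx)\subset D_{\lambda\epsilon,\Theta(x)}\times G\Theta(x)$.

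I expect the main obstacle to be exactly the non-invariance of the transverse bundle $E^s$ under $D\Theta$, emphasized in the text preceding the lemma: one may not conclude that $D\Theta$ sends transverse vectors to transverse vectors. This is resolved by reading the transverse excursion of $\Theta(z)$ through the orthogonal projection onto $E^s_{\Theta(x)}$ and using that projections do not increase norm, so the strict contraction on $E^s_x$ survives the projection. The remaining care — selecting one $\epsilon$ and one remainder bound valid along the whole orbit — is handled by Proposition~\ref{pro:jacobian} together with the compactness of $Gx$.
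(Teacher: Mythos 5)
Your proposal is correct and follows essentially the same route as the paper's proof: a tubular neighborhood of the orbit (you construct it directly from the free, proper action where the paper cites the Slice Theorem), Proposition~\ref{pro:jacobian} to propagate the single-point contraction constant $c<1$ over the compact orbit, and the norm-nonincreasing orthogonal projection onto $E^s_{\Theta(x)}$ to defeat the non-invariance of the transverse bundle. Your explicit first-order expansion with a uniform $o(\|v\|)$ remainder absorbed by choosing $\lambda\in(c,1)$ actually makes precise the final compactness step that the paper states only at the level of derivatives, so if anything your write-up is slightly more careful than the original.
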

\begin{proof}
First of all, one easily deduces that the assumption holds equally for all points $y=\rho x \in Gx$ as $D\Theta(y)=\rho^TD\Theta(x)\rho$. \\
As $G$ acts freely and properly discontinuously, the Slice Theorem for compact Lie groups is applicable (see \cite{Koszul1953},\cite{Palais1957}) which says that there exists a tubular neighborhood $V$ of $Gx$ with a trivial product structure. More precisely, there exists $\eta > 0$ such that $V$ diffeomorphic to the product $Gx \times D_{\eta,x}$ where $D_{\eta,x}$ is a disk around $x$ such that $T_xD_{\eta,x} \subset E^s_x$. Further, as $\Theta$ is $G$-equivariant and smooth, $\Theta(V)$ is as well a tubular neighborhood of $G\Theta(x)$, and in particular, we have $\Theta(D_{\eta,x}) \subset \Theta(V)$. The tangent space $E^s_x$ may not be $D\Theta$-invariant, so $D\Theta(E^s_x) \not \subset E^s_{\Theta(x)}$.\\
Firstly, due to the Lie group structure we can slide $D\Theta(x)E^s_x$ along $G\Theta(x)$ by rotations such that $D\Theta(x)E^s_x \cap T_{\Theta(x)}G\Theta(x)=\left\{\Theta(x)\right\}$. This does not change the vector lenghts as rotations are isometries. Let then $v \in D\Theta(x)E^s_x$. We can project orthogonally $D\Theta(x)v$ onto $E^s_{\Theta(x)}$, and we have $\left\|\pr D\Theta(x)v\right\| \leq \left\|D\Theta(x)v\right\|$. Accordingly, if $D\Theta(x):E^s_x \rightarrow D\Theta(x)E^s_x$ is contracting, then $\pr \circ D\Theta(x):E^s_x \rightarrow E^s_{\Theta(x)}$ is also contracting. Repeating this procedure for any point $y \in Gx$ and by the compactness of $Gx$, we can therefore conclude that there exists $\epsilon > 0$ such that $\Theta(D_{\epsilon,x}) \times Gx) \subset D_{\lambda\epsilon,\Theta(x)} \times G\Theta(x)$ for $\lambda < 1$.
\end{proof} 
\begin{example}
We consider the dynamics of $\id$. The maximal dilation \newline$\max_{v \in E^s_{\id},\left\|v\right\|} \left\|D\Theta(\id)v\right\|$ is exactly the singular value corresponding to directions transverse to the group orbit $G\id$. The singular value decomposition of $D\Theta(\id)=U\Sigma V$ gives us approximate singular values $1.191$, $1.146$, $0.897$, $0.853$, $0.767$, $0.477$, $0.417$. The tangent space of the group orbit is spanned by skew-symmetric matrices. So we have to look which of the singular values correspond to directions tangent to the group orbit. Evaluating the computed orthogonal basis $V$ of $T_{\id}X$, we observe that the first three columns are skew-symmetric matrices so the maximal singular value transverse to the group orbit is $0.853$. This means, that $D\Theta(\id)|_{E^s_{\id}}$ is contracting.   
\end{example}
To prove the convergence of $x$ to a fixed point orbit one needs therefore that there exists a sequence $n_k\rightarrow_{k\rightarrow \infty} \infty $ such that $\max_{v \in E^s_x, \left\|v\right\|=1}\left\|D\Theta^{n_k}(x)v\right\| $ for $k \rightarrow \infty$ is a strictly monotonically decreasing sequence.\\
The maximal dilation $\max_{v \in E^s_{x}, \left\|v\right\|=1}\left\|D\Theta^{n_k}(x)v\right\|$ is equal to the maximal singular value $\sigma(x,n_k)$ of the matrix $D\Theta^{n_k}(x)$ transverse to the group orbit $G\Theta^{n_k}x$. We have to show therefore that 
\begin{equation}\label{eq:lyapunov}
\limsup_{n \geq 0} (\sigma(x,n))=0.
\end{equation}
\begin{lemma}\label{lemma:global}
Let $x \in X$ with the property given by Equation~\ref{eq:lyapunov}.
Then there exists $x^*$ such that $\lim_{n \rightarrow \infty}\Theta^n(x)=x^*.$
\end{lemma}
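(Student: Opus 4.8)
The plan is to carry the entire argument down to the quotient $X/G$ and lift the conclusion only at the very end. By Corollary~\ref{cor:equivariance} and the commuting diagram, $\Theta$ descends to a smooth map $\bar\Theta$ on the symmetric space $X/G=\Sl(3,\mathbb{R})/\So(3,\mathbb{R})$, which is a complete, simply connected manifold of non-positive curvature, i.e. a Hadamard manifold (as discussed in Section~\ref{sec:properties} following \cite{Jost,Hegalson}); in particular any two points are joined by a unique geodesic. Under the identification $T_{[x]}(X/G)\cong E^s_x$ the derivative $D\bar\Theta^n([x])$ is exactly the transverse part of $D\Theta^n(x)$, so its operator norm equals $\sigma(x,n)$ and the hypothesis reads $\|D\bar\Theta^n([x])\|\to 0$. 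It then suffices to establish two things: first, that the projected orbit $[x_n]:=\bar\Theta^n([x])$ converges to some $[x^*]\in X/G$; and second, that the lift $\Theta^n(x)$ does not drift within the limit fiber, so that it converges to a single point $x^*$.

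For the convergence in the quotient I would control the one-step displacements $d_{X/G}([x_n],[x_{n+1}])$ and show they are summable. Writing $[x_{n+1}]=\bar\Theta^n([x_1])$ and $[x_n]=\bar\Theta^n([x_0])$ and integrating $D\bar\Theta^n$ along the unique geodesic $\gamma$ joining $[x_0]$ to $[x_1]$, the mean value inequality gives $d([x_n],[x_{n+1}])\le \bigl(\sup_{t}\|D\bar\Theta^n(\gamma(t))\|\bigr)\,d([x_0],[x_1])$. Because the group orbits are compact (Corollary~\ref{cor:equivariance}), the pointwise contraction $\sigma(x,n)<1$ can be upgraded to a contraction on a whole transverse disk exactly as in the proof of Lemma~\ref{lemma:contracting}: the projected map sends $D_{\epsilon,x}$ into a disk of radius $\lambda_n\epsilon$ with $\lambda_n<1$ tied to $\sigma(x,n)$. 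Feeding $\sigma(x,n)\to 0$ into this estimate yields $\sum_n d([x_n],[x_{n+1}])<\infty$, so $\{[x_n]\}$ is Cauchy and, by completeness of $X/G$, converges to some $[x^*]$; continuity of $\bar\Theta$ forces $[x^*]$ to be a fixed point of $\bar\Theta$.

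To finish I would lift the convergence to $X$. By Statement~\ref{statement} the fixed points of $\bar\Theta$ are the classes of regular tetrahedra, so $[x^*]=[x_{eq}]$ and the orbit eventually enters the tubular neighborhood $U$ of $\mathcal{A}=Gx_{eq}$ produced by Lemma~\ref{lemma:attractor}. That lemma, together with the remark following it, asserts more than convergence to the orbit: since $\mathcal{A}$ consists of fixed points, $\Theta$ restricts to the identity on $\mathcal{A}$ and the iterates of any $y\in U$ converge to a \emph{single} regular tetrahedron. Applying this to $y=x_N$ for $N$ large enough that $x_N\in U$ gives $\lim_{n\to\infty}\Theta^n(x)=x^*$, which is the claim.

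The step I expect to be the main obstacle is the summability of the displacements. The hypothesis $\sigma(x,n)\to 0$ controls the derivative of the composite $\bar\Theta^n$ only at the base point $[x]$, whereas the mean value estimate needs $\|D\bar\Theta^n\|$ to remain small along the whole geodesic $\gamma$, uniformly in $n$. Bridging this gap, i.e. transferring the one-point contraction to a uniform neighborhood contraction, is precisely the role of the compactness of the orbits and of the tubular-neighborhood construction in Lemma~\ref{lemma:contracting}; making it quantitative, by bounding the modulus of continuity of $D\bar\Theta^n$ uniformly in $n$ so that the contracting disks do not shrink to a point prematurely, is the delicate part where a fully rigorous argument must do the real work.
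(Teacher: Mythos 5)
Your route fails at the summability step, and the failure is not the one you flagged. From the mean value inequality you get $d([x_n],[x_{n+1}])\le \sigma'_n\, d([x_0],[x_1])$ with $\sigma'_n=\sup_t\|D\bar\Theta^n(\gamma(t))\|$, and even granting the pointwise-to-neighborhood upgrade so that $\sigma'_n$ is comparable to $\sigma(x,n)$, the hypothesis of Equation~\ref{eq:lyapunov} only says $\sigma(x,n)\to 0$ with no rate whatsoever. Terms tending to zero do not make a series converge: if $\sigma(x,n)\sim 1/n$ the displacement bounds are non-summable and the orbit could drift off without being Cauchy. So the sentence ``feeding $\sigma(x,n)\to 0$ into this estimate yields $\sum_n d([x_n],[x_{n+1}])<\infty$'' is a non sequitur, and it is the load-bearing step of your argument. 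The paper's proof avoids summability altogether: it extracts a subsequence $n_k$ with $\sigma(x,n_k)\to 0$, applies Lemma~\ref{lemma:contracting} to the composite maps $\Theta^{n_k}$, and traps the forward images of one fixed tubular neighborhood $D_{\epsilon,x}\times Gx$ inside tubes $D_{\epsilon_k,\Theta^{n_k}(x)}\times G\Theta^{n_k}x$ with strictly decreasing radii $\epsilon_k$, concluding that the intersection is a single group orbit. That is a shrinking-nested-sets mechanism, for which $\epsilon_k\to 0$ along a subsequence suffices; no control of step-by-step displacements, and hence no rate, is needed. To salvage your version you would either need a summable decay rate for $\sigma(x,n)$ (not available from the hypothesis) or the same trapping argument, at which point the geodesic mean-value machinery does no work.

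A secondary problem is the lifting step: you invoke Statement~\ref{statement} to identify $[x^*]$ with $[x_{eq}]$ and then enter the basin of Lemma~\ref{lemma:attractor}. But Statement~\ref{statement} is explicitly only numerical evidence (``does not prove, but strongly suggests''), so your proof of the lemma rests on an unproven assertion; the paper's proof makes no use of it and the lemma's conclusion, the mere existence of a limit $x^*$, should not depend on knowing the limit is regular. To be fair, the passage from convergence of group orbits to convergence to a single point is treated lightly in the paper as well, and your instinct to exploit the fact that $\mathcal{A}$ consists of fixed points (the remark after Lemma~\ref{lemma:attractor}) is the right idea for that refinement; but as written it cannot be routed through Statement~\ref{statement}. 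The obstacle you did flag, upgrading contraction at the base point to contraction along the connecting geodesic, is genuine but addressable: it is exactly what the compactness of $Gx$ and the slice theorem provide in Lemma~\ref{lemma:contracting}. The summability gap is the step that actually breaks.
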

\begin{proof}
Let $x \in X$ with the property above. Then there exists a subsequence $\sigma(n_k,x)\rightarrow_{k\rightarrow \infty} 0$. Utilizing Lemma~\ref{lemma:contracting} consecutively to the transformation $\Theta^{n_{k}}$, this gives us a strictly monotonically decreasing sequence of diameter $\epsilon_k > 0$ such that 
$$\bigcap_{k \geq 0}\Theta^{n_k}(D^s_{\epsilon,x} \times Gx) \subset \bigcap_{k \geq 0} D^s_{\epsilon_k,\Theta^{n_k}(x)} \times G\Theta^{n_k}x = G \left(\bigcap_{k \geq 0}\Theta^{n_k}(x)\right).$$
This intersection can only consist of one group orbit due to the construction. Consequently, there exists $x^* \in X$ such that $\bigcap_{k \geq 0}\Theta^{n_k}(x)=:x^*$ proving the assertion.
\end{proof} 
\subsubsection{Numerical verification}
As the transformation itself cannot be treated analytically, we can compute the numbers $\limsup \sigma(x,n)$ only for a finite number of points $x \in X$. More precisely, for any arbitrary $x \in \Sl(3,\mathbb{R})$ we take the basis $x\mathfrak{sl}(3,\mathbb{R})$ of $T_xX$ given by the basis $H^1,H^2,E^{12},E^{21},E^{13},E^{31},E^{23},E^{32}$ introduced in Section~\ref{sec:properties} multiplied by $x$. 
We compute $D\Theta^n(x), n\geq 0$, by numerically deriving $\Theta^n(x)$ into the directions of the basis vectors and set $\sigma_i(x,n)$ equal to the $i$-th singular value of $D\Theta^n(x)$. We increase $n$ until $\left\|\sigma(x,n)-\sigma(x,n+1)\right\| < 10^{-8}$ and take then the resulting vector $\sigma(x,n)$ as $\limsup_{n \geq 0}\sigma(x,n)$. Figure~\ref{fig:singular} shows the resulting vector for more than 1000 test tetrahedra.
\begin{figure}[h]
\begin{minipage}{0.49\textwidth}
\includegraphics[width=\textwidth]{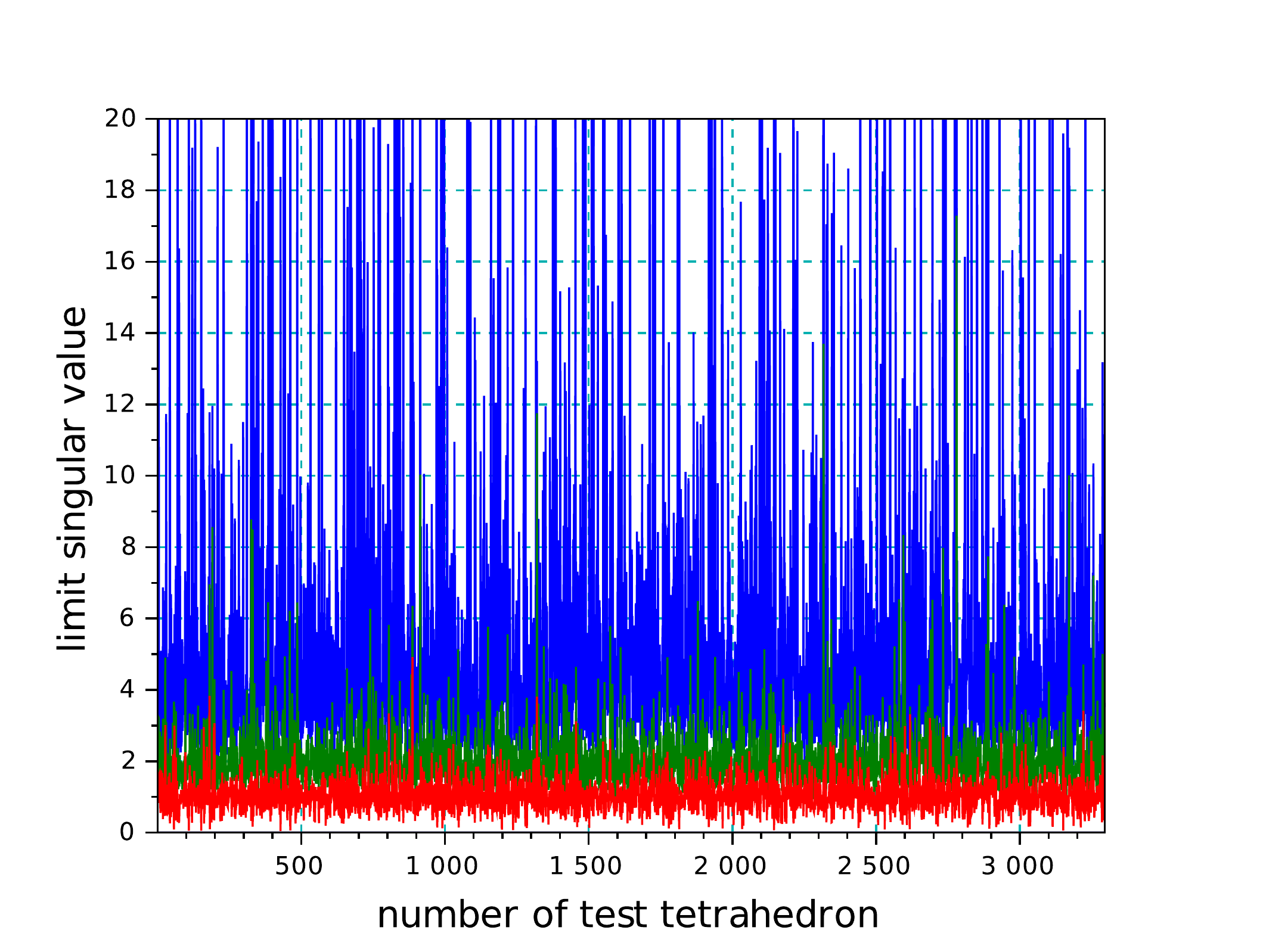}
\end{minipage}
\begin{minipage}{0.49\textwidth}
\includegraphics[width=\textwidth]{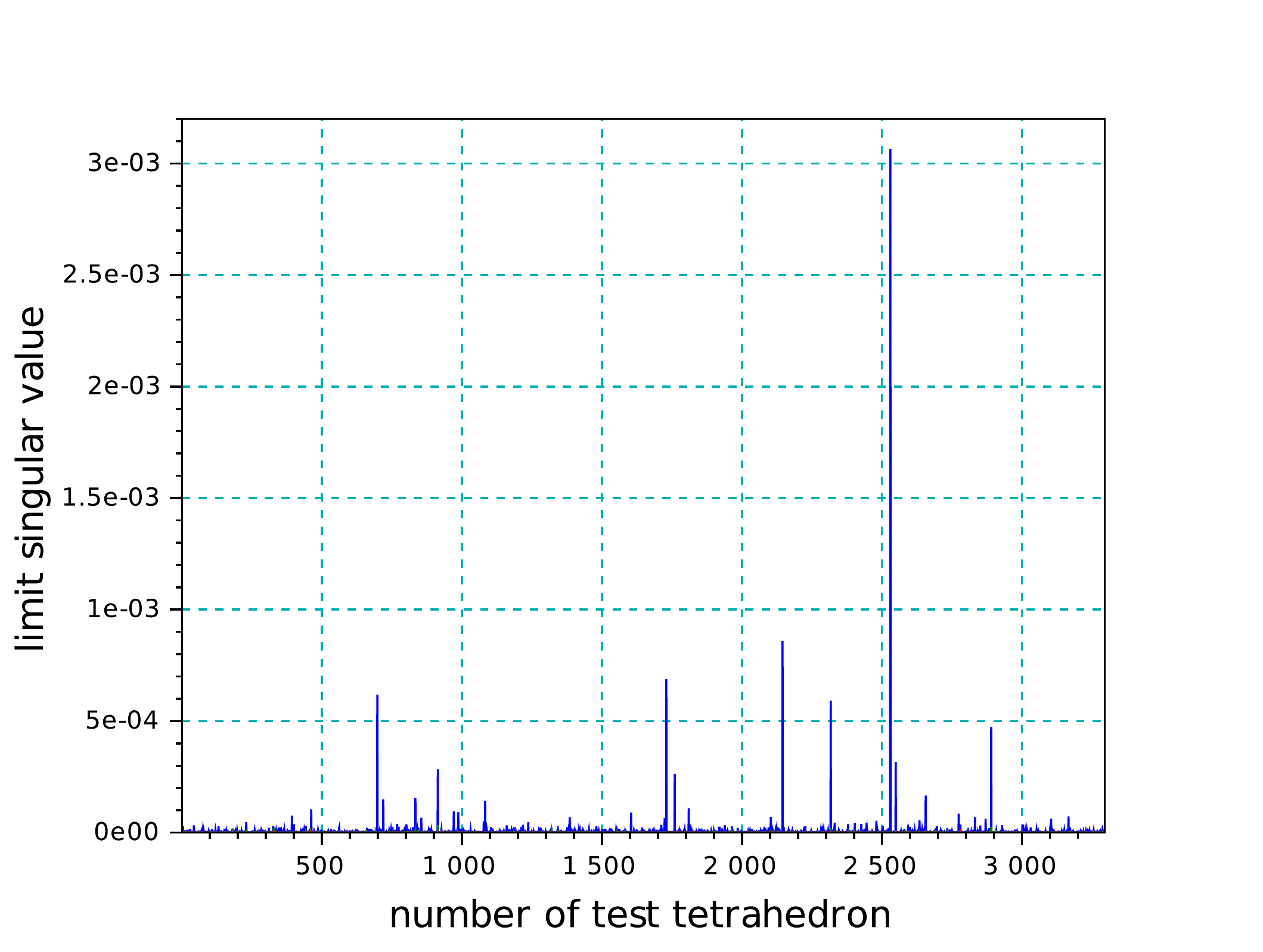}
\end{minipage}
\caption{On the left, the eight limit singular values $\lim \sigma(x,n)$ for test tetrahedra $x$ are shown. One observes that there are exactly three singular values greater than $1$ corresponding to the three directions tangent to the group orbit. The other limit singular values vanish as it can be clearly seen on the right where these five singular values corresponding to the orthogonal complement to $TG$ are exclusively displayed.}
\label{fig:singular}
\end{figure}
\begin{statement}
Numerical computations of the limit singular values \linebreak$\limsup_{n \geq 0}\sigma(x_i,n)$ for randomly chosen test tetrahedra $x_i \in \Sl(3,\mathbb{R}), i=1,\dots,1400$ give us a typical singular value distribution as displayed in Figure~\ref{fig:singular}: three singular values greater than $1$, the others, corresponding to the directions transverse to the group orbit, converge to zero. This suggests that the hypothesis of Lemma~\ref{lemma:global} is fulfilled for $x \in \Sl(3,\mathbb{R})$. Together with Statement~\ref{statement} this implies that the set of regular tetrahedra is a global attractor of $\Theta$.
\end{statement} 
\section{Outlook and Conclusions}
Although we studied one specific tetrahedron transformation, we are convinced that the methods presented above can be useful to understand the dynamics of any tetrahedron transformation thanks to the geometric structure of homogeneous spaces. Therefore, the key for a thorough understanding was certainly the appropriate definition of the domain of our transformation, the space of tetrahedra, as the homogeneous space $\Sl(3,\mathbb{R})/\So(3,\mathbb{R})$. \\
Two natural generalizations of our approach lend themselves:  
\begin{description}
\item[Platonic bodies] Is this approach generalizable to other platonic bodies, notably hexahedra, octahedra, dodecahedra and icosahedra? Their symmetry group are certainly more complicated and the space of hexahedra is not as directly describable than the one of tetrahedra, too. But maybe, there is an elegant way to apply our dynamical approach to geometric transformation of these bodies, too. 
\item[Meshes] A volumetric mesh is defined by a set of points in the euclidean space and the so-called topology which describe which of these points in which order are connected to an $n$-body. Meshes usually arise from the discretization of space for the numerical computation of partially differential equations. As the tetrahedron transformation above was invented and is employed to improve the quality of tetrahedral meshes, it is natural to ask if one could in fact prove that the tetrahedral transformation improves the mesh quality of any tetrahedral mesh (or of a class of tetrahedral meshes). In \cite{VB14} we have already studied the numerical properties of the derived mesh transformation, but a mathematical analysis is still to be accomplished. 
\end{description}

\bibliographystyle{plain} 
\bibliography{bibfile}




\end{document}